\newcommand{\Vol}{\mbox{\rm Vol}}
\newtheorem{theorem}{Theorem}[section]
\newtheorem{proposition}[theorem]{Proposition}
\theoremstyle{remark}
\newtheorem{remark}[theorem]{Remark}
\newtheorem{example}[theorem]{Example}
\title[Refined upper bound in terms of the colored Jones polynomial]{A refined upper bound for the hyperbolic volume of alternating links 
and the colored Jones polynomial}
\author{Oliver Dasbach \and Anastasiia Tsvietkova}
\date{}
\subjclass[2010]{57M25, 57M27,  57M50}
\begin{document}

 \footnotesize
 \begin{abstract} We give a refined upper bound for the hyperbolic volume of an alternating link in terms of the first three and the last three coefficients of 
 its colored Jones polynomial.
 \end{abstract}
 
\maketitle
\normalsize

\section{Introduction}
 
Since quantum invariants were introduced into knot theory, there has been a strong interest in relating them to the intrinsic geometry of a 
link complement. This is for example reflected in the Volume Conjecture \cite{Kashaev:VolumeConjecture, Murakami:VolumeConjecture}, which claims that the hyperbolic volume of a 
link complement in $S^3$ is determined by the colored Jones polynomial. The conjecture has only been proven for very few hyperbolic knots and links.  
 
In this note, we are concerned with correlating the volume with coefficients of the colored Jones polynomial of a hyperbolic alternating link. 
To compute the volume of a particular link complement one may use, for example, techniques developed in SnapPea \cite{Weeks:SnapPea} or in \cite{TT:AlternativeApproachToVolume,Tsvietkova:2Bridge}. However, 
there is no simple expression known for the volume function in general, and one way to establish such a correlation is to find bounds for the volume, and relate those bounds to the colored Jones polynomial \cite{DL:VolumeIsh,FKP:Guts}.

In \cite{Adams:Thesis} and \cite{Lackenby:Volume}, an upper bound for the volume of a hyperbolic link in terms of the number of crossings of a link diagram is obtained. In \cite{Lackenby:Volume}, the bound is also restated in terms of the number of twists of a diagram, and is improved in the appendix so that the result is sharp within its framework. The latter bound is given by a beautifully simple expression that applies to all hyperbolic links. Naturally, despite being sharp in general, this bound is often a serious 
overestimate. The aim of this paper is to give a refined upper bound that uses both the number of twists and the number of crossings within a twist:

 \theoremstyle{theorem}
 \newtheorem*{reptheorem}{Theorem \ref{Thm Upper Bound}}
 \begin{reptheorem}
 Given a diagram $D$ of a hyperbolic alternating link $K$, with $t_i$ twist regions of precisely $i$ half-twists, and $g_i$ twist regions of at least $i$ half-twists, then:
 $$ Vol(S^3- K) \leq (10 g_4(D)+ 8t_3(D)+6t_2(D)+4 t_1(D) - a ) v_3, $$
 where $a =10$ if $g_4$ is non-zero, $a=7$ if $t_3$ is non-zero, and $a=6$ otherwise. 
 \end{reptheorem}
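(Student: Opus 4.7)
The plan is to construct an explicit ideal triangulation of $S^3\setminus K$ whose number of tetrahedra is controlled by the twist data of $D$, and then apply the volume bound $\mathrm{Vol}(S^3\setminus K)\leq v_3\cdot N$, where $N$ is the number of ideal tetrahedra and $v_3$ is the volume of the regular ideal tetrahedron (the volume maximizer). The refinement over the Lackenby--Agol--D.~Thurston bound, which effectively assigns $10\,v_3$ to each twist region, will come from a twist-by-twist analysis that exploits the genuinely simpler local structure at twist regions with few half-twists.

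I would first set up Menasco's checkerboard decomposition of $S^3\setminus K$ into two ideal polyhedra $P_+$ and $P_-$, glued along the projection plane with ideal vertices at the crossings. Each twist region of $i$ half-twists contributes $i-1$ bigon faces to this decomposition, which are collapsed following Lackenby's appendix argument to yield a simplified polyhedral structure; coning each simplified polyhedron from one of its ideal vertices then produces an ideal triangulation of the complement.

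The refinement is a careful local count. For a twist region of exactly $i\in\{1,2,3\}$ half-twists, the local cell structure in $P_+\cup P_-$ is strictly simpler than in the generic $i\geq 4$ case, and an explicit coning of the surrounding piece contributes $4,6,8$ tetrahedra respectively, rather than the worst-case $10$. I would verify this by drawing the polyhedral picture of $P_\pm$ near each such twist region and triangulating by hand, checking that the bigon collapses together with the coning produce exactly the claimed tetrahedron count. Summing local contributions over all twist regions yields the main term $10 g_4(D)+8 t_3(D)+6 t_2(D)+4 t_1(D)$.

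The constant $a$ records a one-time global saving from choosing the coning vertex optimally: placing the cone point adjacent to the \emph{most complex} twist region present---one of type $g_4$ if any exist, else $t_3$, else $t_2$---absorbs $a=10,7$, or $6$ tetrahedra into the cone and removes them from the count. The main obstacle, I expect, will be verifying that the local triangulations at distinct short twist regions can be consistently glued along their shared faces in $P_\pm$, so that the per-region counts are genuinely additive; this will require a careful choice of coning vertex and of the diagonals used to triangulate the non-triangular faces of the simplified polyhedra, and likely a separate argument for small diagrams in which the $-a$ correction could threaten to exceed the main term.
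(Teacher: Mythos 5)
The central gap is the coefficient $10\,g_4(D)$: your construction never leaves the original link, and a triangulation built directly from Menasco's decomposition of $D$ (even after collapsing bigons) cannot give a contribution bounded by $10$ for a twist region with arbitrarily many crossings. Collapsing the $i-1$ bigons of an $i$-crossing twist region identifies edges but keeps all $i$ ideal vertices, and the two faces flanking that twist region acquire boundary paths whose length grows linearly in $i$; since the cone/bipyramid construction produces one tetrahedron per side of each glued face pair, the tetrahedron count for such a region grows linearly in $i$, not the ``worst-case $10$'' you assert. (Your appeal to ``Lackenby's appendix argument'' is also misplaced here: Agol--Thurston do not collapse bigons of the Menasco decomposition of $K$; they triangulate the complement of the \emph{fully augmented} link.) The paper obtains the uniform $10$ precisely by changing the manifold: every twist region with at least four crossings is encircled by a crossing circle, the augmented link $J$ is hyperbolic (Adams), the crossings inside the encircled twists are then removed without changing the volume (Adams' result on $3$-punctured spheres), and Thurston's Dehn-filling theorem gives $\Vol(S^3-K)\le\Vol(S^3-J)=\Vol(S^3-L)$. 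The polyhedral decomposition and coning are carried out for $S^3-L$, where each crossing circle contributes a bounded ``bow-tie'' configuration ($4$ tetrahedra on the two bow-tie triangles plus $6$ adjacent ones $=10$), independently of how many crossings the original twist had. Without this augmentation/Dehn-filling step, the $10\,g_4(D)$ term in your count is unjustified, and this is exactly the part of the statement that a purely local triangulation of $S^3\setminus K$ cannot deliver.

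A secondary inaccuracy: the short-twist contributions $4,6,8$ and the constant $a$ are not obtained in the paper by coning from an ideal vertex placed next to the ``most complex'' twist. The paper cones from two \emph{interior} vertices $v_1,v_2$, one per polyhedron; the crossings of the unaugmented one-, two- and three-crossing twists appear as ``pillowcase'' faces, and the counts $4,6,8$ arise because the four tetrahedra over each pillowcase degenerate when the pillowcases are folded and their edges identified. The subtraction of $a$ happens afterwards, by collapsing $v_1,v_2$ onto a single ideal vertex: onto the centre of a bow-tie when $g_4\neq 0$ (removing $10$ tetrahedra), and otherwise onto a vertex chosen so that its incident edges do not lie on two pillowcases of the same twist, where the two non-pillowcase faces at that vertex account for at least $6$ tetrahedra (at least $7$ when $t_3\neq 0$). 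Your heuristic for $a$ has the right spirit, but it needs this precise bookkeeping, and your worry about gluing consistency of locally chosen diagonals is dissolved in the paper's scheme because all tetrahedra share the single edge from $v_1$ to $v_2$ before the final collapse.
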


The bound provides better estimates for links with smaller volumes, and thus is better suited to our purposes. This result allows us to bound the volume in terms of coefficients of the colored Jones polynomial:

\newtheorem*{reptheorem1}{Theorem \ref{refinedVolumeIsh}}
\begin{reptheorem1}
Let $K$ be an alternating, prime, non-torus link,
and let 
\begin{equation*} 
J_K(n) = \pm (a_n q^{k_n} - b_{n} q^{k_n-1} + c_{n} q^{k_n-2})+\ldots \pm (\gamma_n q^{k_n-r_n+2} - \beta_n q^{k_n-r_n+1} + \alpha_n q^{k_n-r_n})
\end{equation*}
 be the colored Jones polynomial of $K$, where $a_n$ and $\alpha_n$ are positive.  Then
$$\Vol(S^3-K) \leq \left ( 6 ((c_2+\gamma_2) - (c_3 + \gamma_3)) - 2 (b_2+ \beta_2) -a \right ) v_3 \leq 10 (b_2+\beta_2-1) v_3,$$
where $a=10$ if $b_2+\beta_2 \neq (c_2-c_3)+(\gamma_2-\gamma_3)$ and $a=4$ otherwise.
\end{reptheorem1}

Note that this theorem utilizes the third and the prepenultimate coefficients of the colored Jones polynomial, while previously only the correlation of the first two and last two coefficients with the volume was shown.

\section{A refined upper bound for the hyperbolic volume}
 
Given a reduced alternating diagram $D$ of a hyperbolic link, we will denote the number of crossings by $c(D)$. The crossing number of a link is the minimal crossing number among all its diagrams, and is denoted by $c$. A bigon is a region of the diagram having exactly two crossings in its boundary. A twist is either a connected collection of bigons arranged in a row (Figure \ref{twistloop1}), which is not a part
of a longer row of bigons, or a single crossing adjacent to no bigons. The twist number $t(D)$ is the number of twists in the diagram $D$. Furthermore, denote the number of twists that have exactly $i$ crossings (i.e. $i-1$ bigons) by $t_i(D)$, and denote the number of twists that have at least $i$ crossings by $g_i(D)$. In particular, $g_1(D)=t(D)$ and $g_2(D)=t(D)-t_1(D)$.

\begin{figure}
\centering
\begin{subfigure}[b]{0.4 \textwidth}
\centering
\includegraphics[scale=0.27]{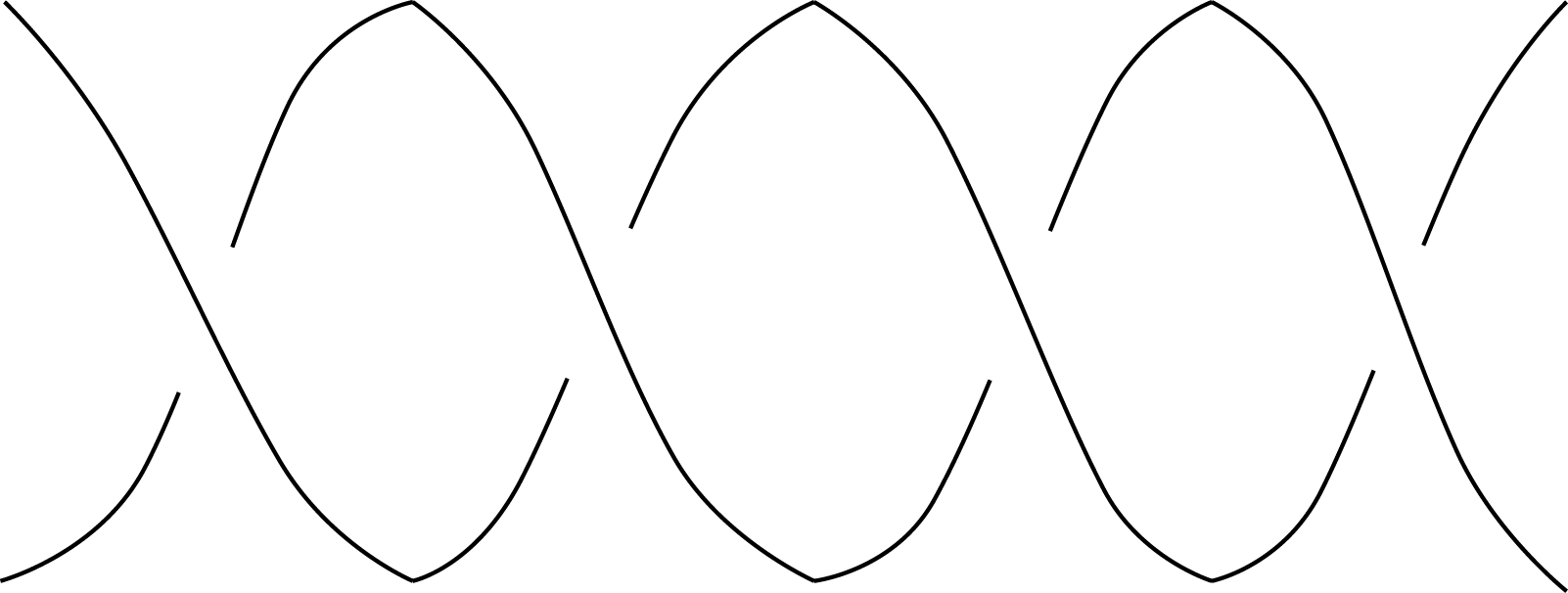}
\caption{A twist with $4$ crossings}
\label{twistloop1}
\end{subfigure}
\begin{subfigure}[b]{0.4 \textwidth}
\centering
\includegraphics[scale=0.27]{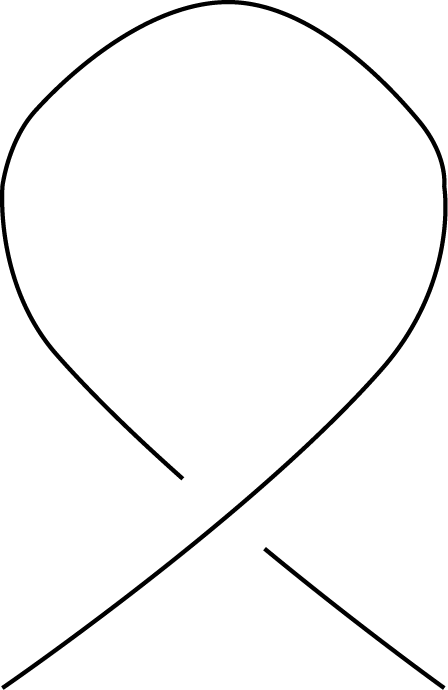} 
\caption{A loop}
\label{twistloop2}
\end{subfigure}
\caption{Twists and loops}
\end{figure}

In this section, we modify the approach suggested in \cite{Lackenby:Volume}, in order to obtain an upper bound for the volume in terms of $t_i(D)$ and $g_i(D)$. This includes two steps. Firstly, we describe a decomposition of an augmented link into two ideal polyhedra. 
Secondly, we perform a triangulation of the polyhedra and count the number of ideal tetrahedra. In the next section, we use this to prove a result reflecting the relation of the volume with the first three and last three coefficients of the colored Jones polynomial. 
 
We will start with a review of previously obtained upper bounds for hyperbolic volume of links. Henceforth, let $v_3$ be the volume of a regular ideal hyperbolic tetrahedron (therefore, $v_3$ is the greatest possible volume of an ideal hyperbolic 3-simplex). 
The first upper bound, for knots and in terms of the number of crossings, appears in \cite{Adams:Thesis} (see \cite{CallahanReid:Hyperbolic}).  

\begin{theorem}[C. Adams, 1983] For a hyperbolic knot $K$ with crossing number $c$, different from the figure-eight
knot, $$\Vol(S^3 - K) \leq (4c - 16) v_3.$$
\end{theorem}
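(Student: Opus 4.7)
The plan is to use a polyhedral decomposition of the knot complement combined with a careful count of ideal tetrahedra in a triangulation of each piece. I would start from a diagram $D$ of $K$ realizing the crossing number $c$, and apply a polyhedral decomposition of $S^3 \setminus K$ (Menasco's construction in the alternating case, and a reduction via augmented links followed by volume-nonincreasing Dehn filling in general) to realize the complement as the union of two ideal polyhedra $P_+$ and $P_-$. Each polyhedron carries exactly $c$ ideal vertices, in one-to-one correspondence with the crossings of $D$, and each such vertex is four-valent because the four strands at a crossing contribute four edges meeting at the corresponding ideal vertex.

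Second, I would triangulate each of $P_+$ and $P_-$ into ideal tetrahedra by a combinatorial subdivision. The key estimate is that a four-valent combinatorial ideal polyhedron with $c$ ideal vertices can be cut into at most $2c - 8$ ideal tetrahedra. One way to achieve this is to first subdivide each 2-face into triangles using interior diagonals, and then cone all of the resulting triangles from a carefully chosen apex vertex $v_0$, discarding the degenerate cones over triangles lying in faces incident to $v_0$. A bookkeeping argument using Euler's formula $V - E + F = 2$ together with the four-valence relation $2E = 4V$ controls the triangle count and yields $2c - 8$. Summing over the two polyhedra produces a triangulation of $S^3 \setminus K$ with at most $4c - 16$ ideal tetrahedra.

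Third, I would pass to the complete hyperbolic structure on $S^3 \setminus K$: straightening the combinatorial ideal simplices against this metric does not increase the total hyperbolic volume, and each nondegenerate ideal hyperbolic tetrahedron has volume at most $v_3$, the volume of the regular ideal tetrahedron. Summing this pointwise bound over the at most $4c - 16$ simplices yields
$$\Vol(S^3 - K) \leq (4c - 16)\, v_3.$$

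The main obstacle is establishing the sharp combinatorial count of $2c - 8$ simplices per polyhedron, which requires a careful choice of apex vertex and diagonalization scheme in the presence of low-sided faces. The figure-eight knot must be excluded because $c = 4$ makes the bound vacuous: its complement decomposes into two regular ideal tetrahedra that cannot be further subdivided, and in this extremal case the actual volume $2 v_3$ exceeds the trivial right-hand side, showing that the counting argument is tight precisely at the threshold.
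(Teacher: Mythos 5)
The paper does not prove this statement at all: it is quoted as a known result of Adams (cited to his thesis, via Callahan--Reid), so the only in-paper object to compare with is the proof of Theorem~\ref{Thm Upper Bound}, which the authors describe as essentially recovering Adams' bound (up to the additive constant) when every twist has one crossing. Your outline is the standard argument behind both: two crossing-ball (Menasco-type) polyhedra, each with $c$ four-valent ideal vertices, hence $2c$ edges and $c+2$ faces; subdividing faces gives $\sum_f(\mathrm{sides}(f)-2)=4c-2(c+2)=2c-4$ triangles per polyhedron; coning from a vertex and discarding the faces incident to it gives at most $2c-8$ tetrahedra per polyhedron, so $4c-16$ in total, and straightening against the complete structure bounds the volume by $v_3$ per tetrahedron. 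The only structural difference from the paper's own count is that the paper cones from two new interior vertices $v_1,v_2$ and collapses them onto an ideal vertex at the end, whereas you cone from an ideal vertex directly; these are equivalent pieces of bookkeeping.

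Two points in your write-up are genuine gaps rather than routine details. First, the per-polyhedron bound $2c-8$ is exactly the step you defer: discarding the four faces at the apex removes at least four triangles only if each of those faces has at least three sides, but every twist with two or more crossings produces bigon faces, and a bigon contributes nothing to the discard. You must either collapse bigons first (which changes the count and leads to twist-number bounds of the type in Theorem~\ref{Thm Upper Bound}), or prove that a reduced minimal diagram of a hyperbolic knot with $c\geq 5$ has a crossing whose four incident faces jointly carry at least four triangles; this is precisely where the figure-eight knot enters, and note that for it the inequality is \emph{false}, not vacuous: the right-hand side is $0$ while the volume is $2v_3$. Second, your proposed treatment of non-alternating diagrams (``reduction via augmented links followed by volume-nonincreasing Dehn filling'') does not work as stated: as Remark~\ref{AdditionalRemark} points out, augmenting a non-alternating diagram need not yield a hyperbolic link, and in any case augmentation controls the twist number, not the quantity $4c-16$. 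The standard repair is that the crossing-ball decomposition into two balls with four-valent vertices at the crossings does not require alternation, so the same count applies to any connected, reduced, prime diagram realizing $c$, with the straightening/degree-one argument needing only a topological decomposition; if you want the alternating-only statement, say so explicitly, since the theorem as quoted is for all hyperbolic knots.
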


This result is recovered for hyperbolic links by Marc Lackenby \cite{Lackenby:Volume} and it is reformulated in terms of the number of twists. In an appendix it is improved by Ian Agol and Dylan Thurston as follows:

\begin{theorem}[\cite{Lackenby:Volume}] \label{Lackenby:Volume}
Given a diagram $D$ of a hyperbolic link $K,$
$$\Vol (S^3 - K) \leq 10 (t(D) - 1) v_3.$$
\end{theorem}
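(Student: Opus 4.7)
The plan is to follow the augmented link strategy introduced by Lackenby and sharpened by Agol and Thurston. First, given the alternating diagram $D$ with $t(D)$ twist regions, I would pass to the \emph{fully augmented link} $L$ obtained by encircling each twist region with an unknotted crossing circle bounding a twice-punctured disk, and then removing full twists from inside each crossing disk so that every twist region contains either $0$ or $1$ crossing. Since a full twist inside a crossing circle can be undone by a Dehn twist along its bounding disk, the original complement $S^3 - K$ is obtained from $S^3 - L$ by Dehn filling each crossing circle along an appropriate slope. By Thurston's theorem that hyperbolic volume is strictly decreasing under Dehn filling,
\begin{equation*}
\Vol(S^3 - K) \leq \Vol(S^3 - L),
\end{equation*}
so it suffices to bound the volume of the augmented link complement in terms of $t(D)$.

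Second, I would decompose $S^3 - L$ into two ideal polyhedra along the projection sphere, cut open along the crossing disks. After flattening the twist regions, each crossing disk becomes a twice-punctured disk with two triangular faces (one above and one below the projection plane), and each complementary region of the link shadow becomes an ideal face of each polyhedron. The combinatorics of the two resulting polyhedra can thus be read directly off the flattened diagram. By a standard argument, this decomposition realizes the hyperbolic structure, and each face of each polyhedron corresponds in a controlled way to either a twist region or a projection-plane region.

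Third, I would triangulate each polyhedron into ideal tetrahedra by coning on appropriately chosen ideal vertices, and count the tetrahedra. A naive count of the form $C \cdot t(D)$ is straightforward: the crux of the Agol--Thurston improvement is to perform the triangulation so that each twist region contributes at most $10$ ideal tetrahedra in total across the two polyhedra, and then to save an additional $10$ tetrahedra globally by exploiting how the two polyhedra close up along the projection sphere. Since the volume of any ideal hyperbolic tetrahedron is at most $v_3$, summing yields $\Vol(S^3 - L) \leq 10(t(D) - 1) v_3$. The main obstacle is precisely this third step: obtaining the sharp constant $10$ together with the $-1$ savings requires committing to a specific triangulation scheme and a careful global accounting, rather than a generic cone triangulation. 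This is also the point of the proof that must be refined in order to prove the stronger Theorem \ref{Thm Upper Bound}, where the extra information about small twist regions enters the count.
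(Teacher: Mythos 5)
Your overall strategy --- encircle the twist regions, remove the twists, bound $\Vol(S^3-K)$ by the volume of the augmented complement via Dehn filling, decompose that complement into two polyhedra along the projection plane, cone-triangulate, and count tetrahedra --- is exactly the Agol--Thurston argument that this theorem cites; note that the paper states Theorem \ref{Lackenby:Volume} as a quoted result without proof, and only reworks this machinery in its proof of Theorem \ref{Thm Upper Bound}. As a proof, however, your write-up has a genuine gap at its center: the entire content of the bound is the count you defer. You assert that a suitable triangulation gives at most $10$ tetrahedra per twist region plus a global saving of $10$, and then concede that producing such a scheme is ``the main obstacle.'' The missing details are concrete and are supplied (in refined form) in the paper's proof of Theorem \ref{Thm Upper Bound}: each crossing circle meets the projection plane so that its twice-punctured disk produces two ``bow-tie'' pairs of triangular faces, one in the upper polyhedron and one in the lower; coning from interior vertices $v_1,v_2$ gives four tetrahedra over the bow-tie triangles and six more over the faces adjacent to the six bow-tie edges, i.e.\ ten per crossing circle; and the $-1$ comes from collapsing $v_1$ and $v_2$ to the ideal vertex at the center of one bow-tie, which eliminates the ten tetrahedra incident to it. Without this (or an equivalent) accounting, your argument only yields $\Vol(S^3-K)\leq C\,t(D)\,v_3$ for an unspecified constant, not the sharp $10(t(D)-1)$.

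A second, smaller gap concerns generality. The theorem is stated for an arbitrary diagram of an arbitrary hyperbolic link, but you start from ``the alternating diagram $D$'' and invoke Thurston's strict volume decrease under Dehn filling, which requires the augmented complement $S^3-L$ to be hyperbolic. For alternating $K$ this is guaranteed by Adams' theorem on augmented alternating links (this is precisely how the paper argues, see Remark \ref{AdditionalRemark}), but for a general diagram the augmented link need not be hyperbolic, and Remark \ref{AdditionalRemark} explicitly flags this. The original argument avoids the issue either by first making the diagram prime and twist-reduced, or by bounding the volume of the filled manifold by $v_3$ times the number of tetrahedra in the triangulation of $S^3-L$ (a simplicial-volume argument, since Dehn filling does not increase the Gromov norm), which does not require $S^3-L$ to be hyperbolic. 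Also, after removing full twists you are left with possible single half-twists at the crossing circles; to flatten these completely one uses the homeomorphism of the complement obtained by cutting and regluing along the twice-punctured disk, as the paper does when passing from $J$ to $L$.
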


We will improve the latter bound for alternating links at a cost of introducing more parameters. Let us start by using Lackenby's technique of modifying a link diagram through the addition of crossing circles. In particular, given a reduced alternating diagram of a link $K$, we encircle every twist that has at least four crossings by a simple closed curve, called a crossing circle. The obtained link $J$  is an augmented alternating link, and therefore is hyperbolic \cite{Adams:Augmented}. Moreover, $S^3 -K$ can be obtained from $S^3 - J$ by Dehn fillings of the tori that correspond to crossing circles, and therefore the volumes satisfy $\Vol(S^3 - K)\leq\Vol(S^3 - J)$ by Theorem 6.5.6 in \cite{Thurston:Book} (the equality holds if and only if none of the twists is augmented, and the link is unchanged). Next we delete all crossings in the twists that we encircled, obtaining a new link $L$. The volume of $S^3 - L$  is equal to the volume of $S^3 - J$ by \cite{Adams:3punctured}. Note that although the diagram of $J$ is not alternating, it is reduced in the sense that there are no loops as shown in Figure \ref{twistloop2}.

Denote the diagram of $L$ by $D_a$ (subindex $a$ stands for ``augmented"). Note that $D_a$ consists of 2-tangles (i.e. tangles from which there emerge four arcs pointing out) $T_1, \ldots, T_p$, connected with each other or with itself through combinations of crossing circles (as, for example, in Figure \ref{tangles1}). We may assume that none of $T_k, \, k=1, \ldots p,$ is trivial in the sense of Lickorish \cite{Lickorish:Tangles}, i.e. none consists of two parallel strands only.

\begin{figure}
\begin{center}
\begin{subfigure}[b]{0.4 \textwidth}
\centering
\includegraphics[scale=0.53]{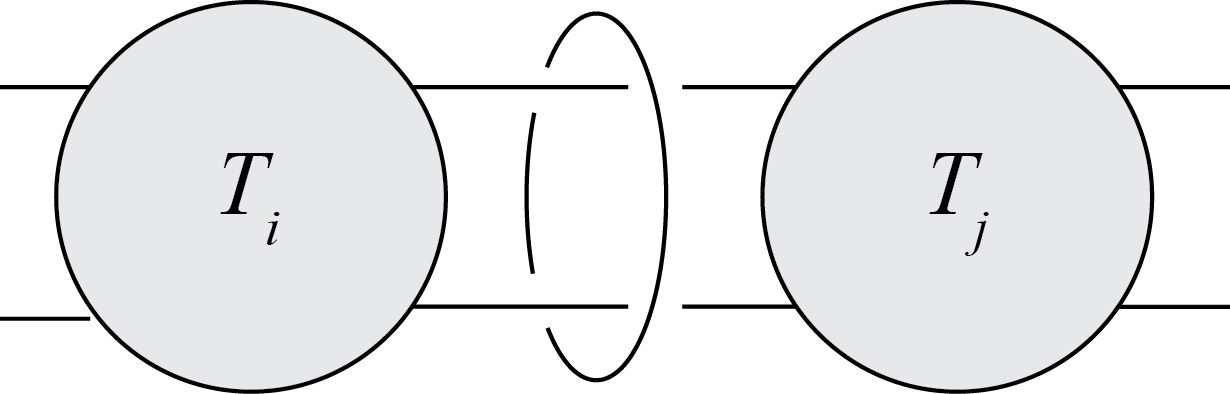}
\subcaption{Non-trivial tangles \label{tangles1}}
\end{subfigure}
\begin{subfigure}[b]{0.4 \textwidth}
\centering
\includegraphics[scale=0.33]{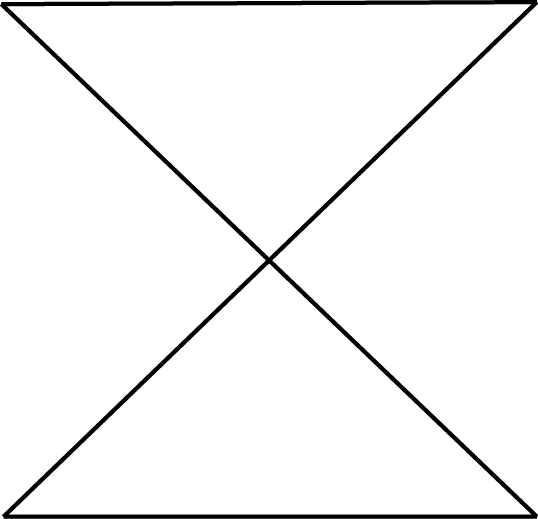} 
\subcaption{``Bow-tie'' faces \label{tangles2}}
\end{subfigure}
\end{center}
\caption{Tangles \label{tangles}}
\end{figure}

The diagram $D_a$ lies in the projection plane (denote the plane by $P$) except for small perturbations at crossings and the crossing circles. In \cite{Menasco:PolyhedraDecomposition}, a technique of decomposing a link complement into two ideal polyhedra, one above $P$, and one below, is described. In particular, a constructive algorithm giving such a decomposition for alternating links is given. In the next few paragraphs, we somewhat recall and modify the technique.

Imagine thickening the strands of $D_a$ until they (almost) touch each other at crossings. Each crossing circle pierces $P$ twice, and is divided into two half-circles by $P$. The two-punctured disk that it bounds (called a twisting disk) is divided by $P$ into two half-disks. The projection plane itself is divided into planar regions $R_1, \ldots, R_m$ by thickened strands of a link and by the two-punctured disks that ``cut" into it. More precisely, we say that a planar region $R_j$ is bounded by the segments in $P$ that lie either on the boundary of a thickened strand, or at the intersection of the interior of a two-punctured disk with $P$. A segment ends whenever it meets another strand, a crossing circle or a twisting disk.  Figure \ref{planarregions} shows examples of a planar $4$-sided region and a planar $3$-sided region in the projection plane respectively.

\begin{figure}[ht]
\centering
\begin{subfigure}[b]{0.4 \textwidth}
\centering
\includegraphics[scale=0.36]{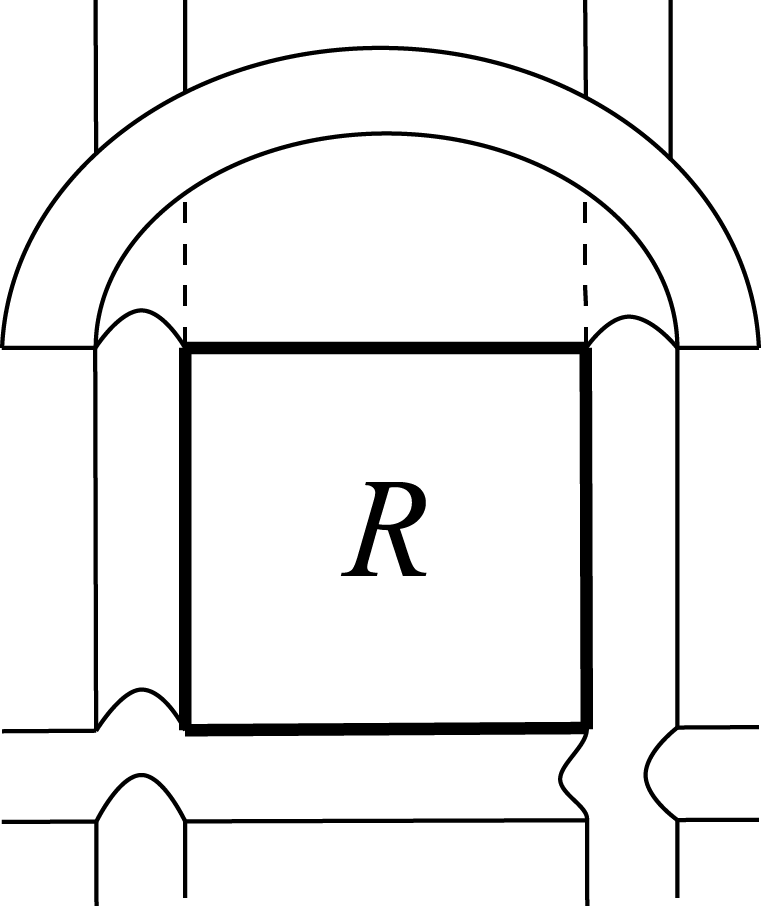}
\caption{A planar $4$-sided region $R$}
\label{planarregions1}
\end{subfigure}
\begin{subfigure}[b]{0.4 \textwidth}
\centering
\includegraphics[scale=0.38]{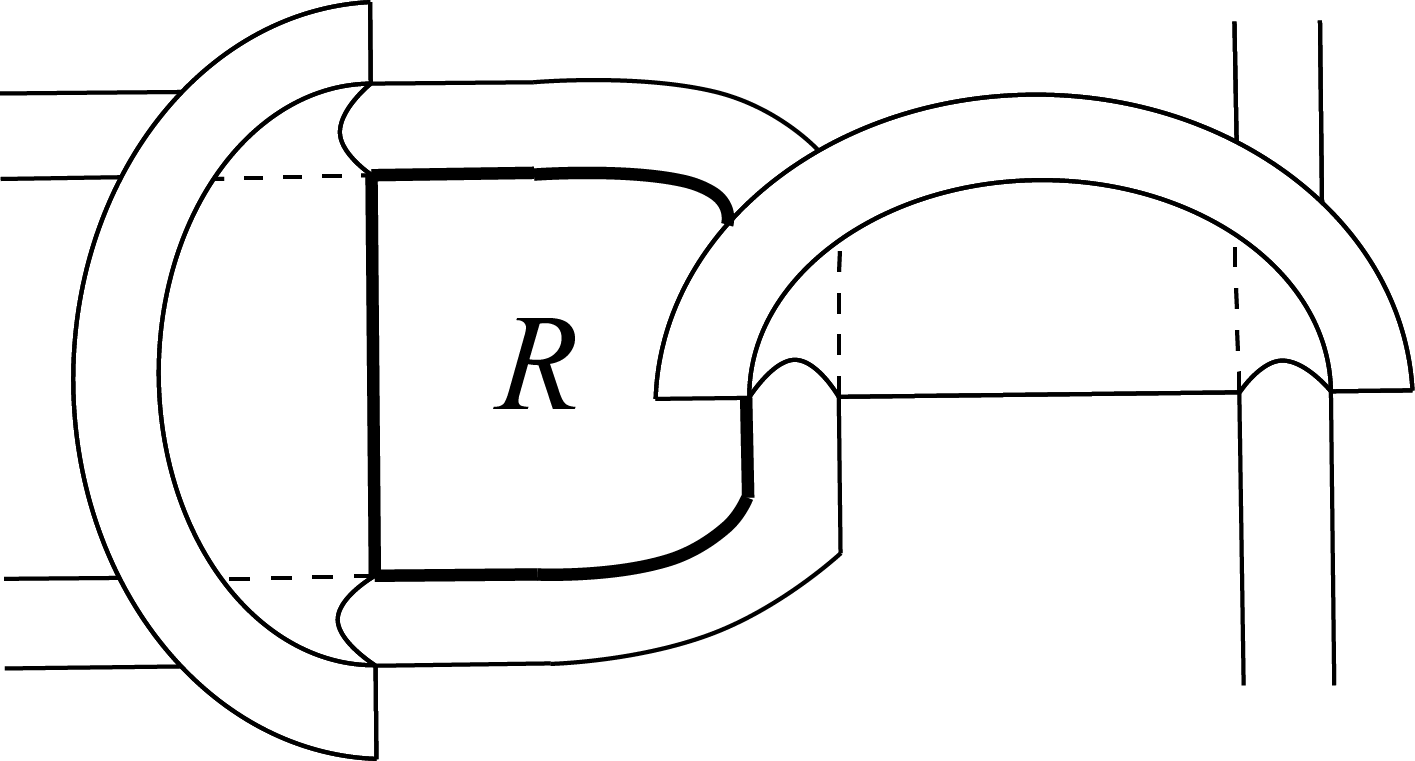}
\caption{A planar $3$-sided region $R$}
\label{planarregions2}
\end{subfigure}
\caption{Planar regions $R$}
\label{planarregions}
\end{figure}

Let us put two balls $B_1$ and $B_2$ in $S^3 - L$, one above the projection plane, and one below. We inflate both balls simultaneously so that for an observer situated at the plane, the balls look like mirror images of each other. Eventually, the balls fill all of the $S^3 - L$ and touch themselves and each other in the following way: (i) an upper ball touches itself at the upper half of every twisting disk; (ii) similarly, the lower ball touches itself at the lower half of every twisting disk; (iii) the upper ball touches the lower ball at every planar region $R_j, \, j=1, \ldots, m,$ lying in the projection plane.

On the boundary of each $B_i, i=1$ or $i=2$, there is a planar graph $\Gamma_i$ determined by the diagram $D_a$ as follows:
 Consider all parts of strands of the link that are visible from above of $P$. Subdivide them into segments $s_1, s_2, \ldots, s_h$ as described above.  Each vertex of $\Gamma_1$ corresponds to one of $s_i, \, i=1, 2, \ldots, h$, and vice versa. The edges of $\Gamma_1$ arise where the thickened strands of $D_a$ ``touch" each other, i.e. at crossings or crossing circles, in the following way. Where $B_1$ touches one of the half-disks from both sides, there are two regions in $\Gamma_1$, with three vertices each, connected by edges as a bow-tie, as in 
Figure \ref{tangles2} (these faces are described in the appendix to \cite{Lackenby:Volume}).  In addition, every crossing of $T_k, \, k=1, 2,  \ldots p,$ yields four new edges of $\Gamma_1$. The four arcs in $S^3 - L$, corresponding to these edges, travel from an overpass to an underpass of this crossing in $D_a$. The arcs are all homotopic to one another, as described in \cite{Menasco:PolyhedraDecomposition}. 

We have listed all edges and vertices of $\Gamma_1$ by now. Now two edges of $\Gamma_1$ meet at the same vertex if and only if the corresponding arcs in the link complement meet the same $s_i$. The construction implies that every vertex of the graph is now four-valent, with some multiple (to be precise, double) edges resulting from twists of $D_a$ that have at least four crossings. We will identify all the edges between the same two vertices $v_1, v_2$, obtaining just one edge instead. The second graph, $\Gamma_2$, is constructed similarly, but using the segments of the strands of $D_a$ visible from below. Note that the diagram 
$D_a$ from above looks like the mirror image of the diagram from below, but with overcrossings and undercrossings interchanged.

Figure \ref{Gamma} demonstrates an example of a link diagram (the leftmost picture) and the corresponding graph $\Gamma_1$ (the rightmost picture). Two pairs of the bow-tie faces are shaded in the graph, and all segments $s_i$ of $D_a$ are numbered, except for the two upper half-circles. 
On $\Gamma_1$, the numbering of the vertices corresponds to the numbering of $s_i$ in $D_a$. 

\begin{figure}[ht]
\begin{center}
\includegraphics[scale=0.51]{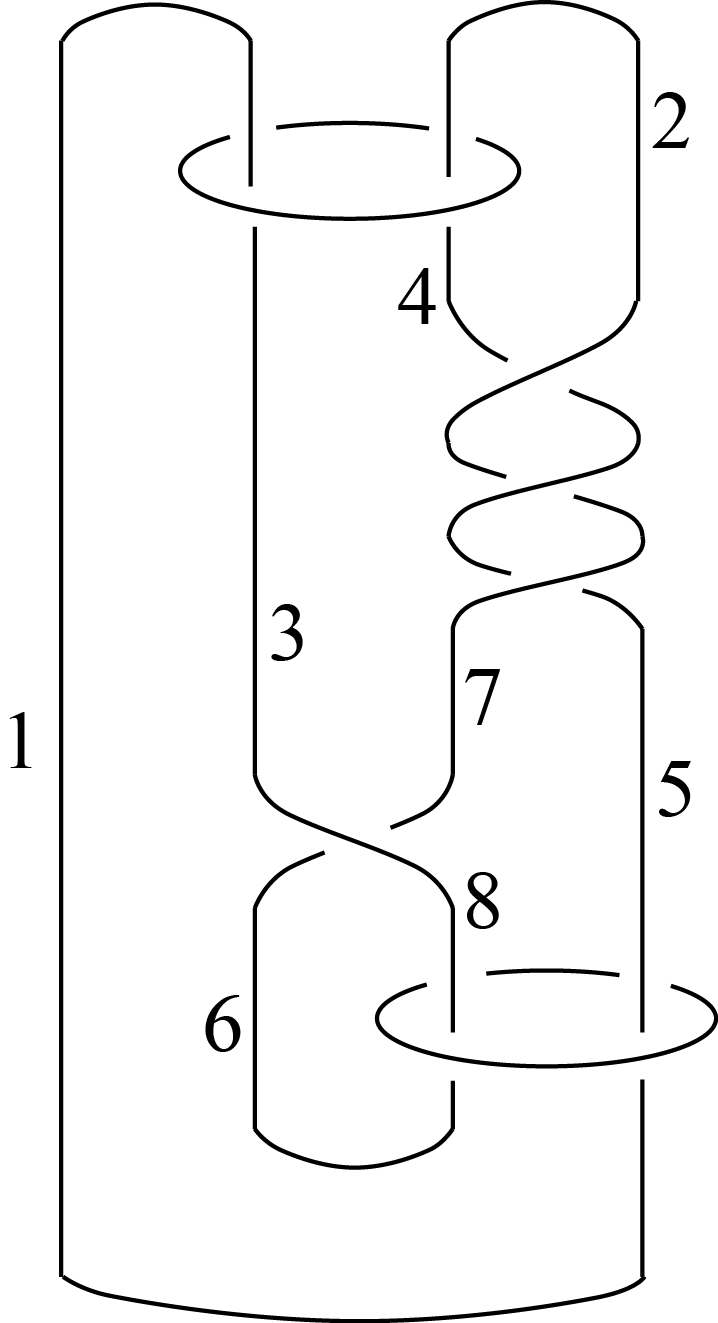} \hspace{0.6in}
\includegraphics[scale=0.51]{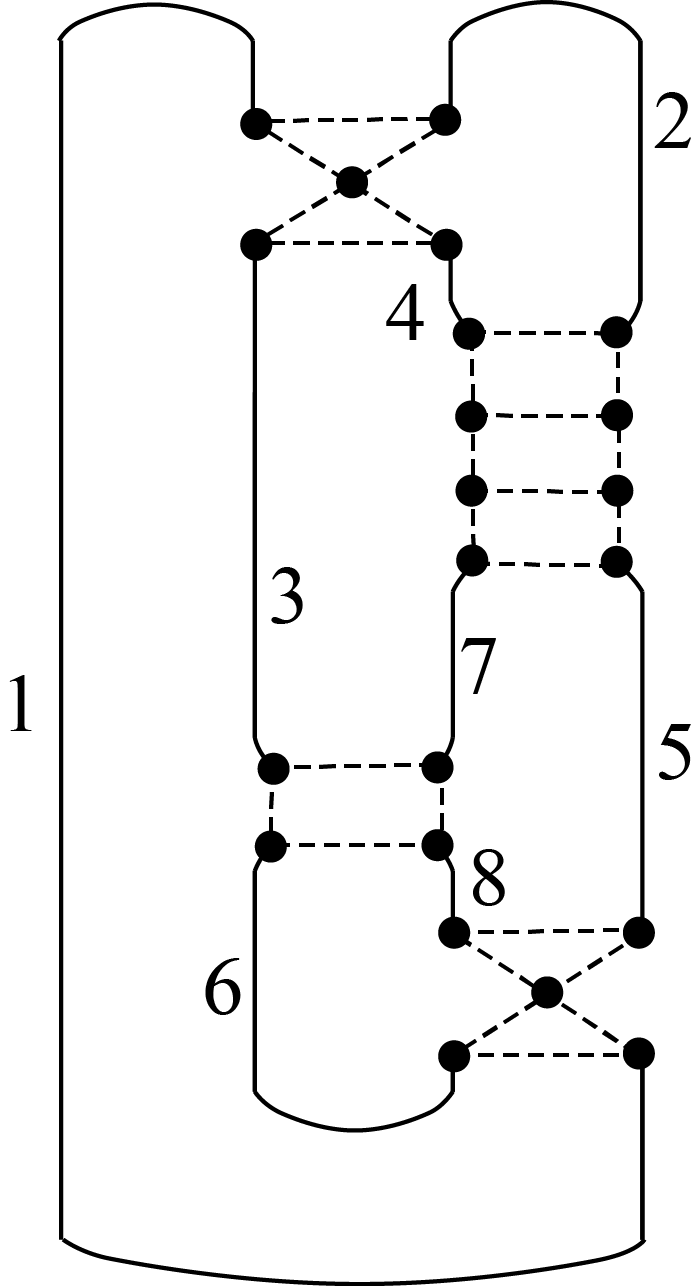} \hspace{0.5in}
\includegraphics[scale=0.58]{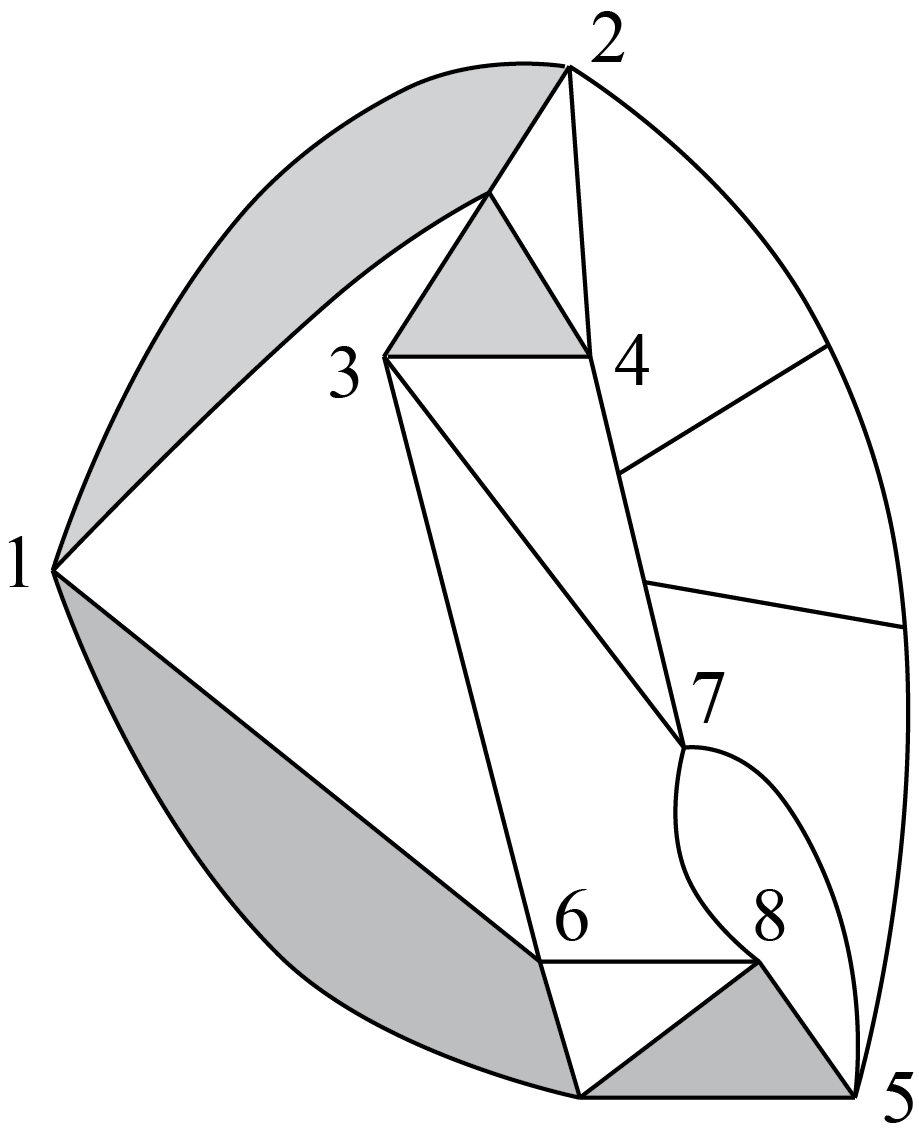}
\end{center}
 \caption{A diagram $D_a$, the auxiliary picture, and the corresponding graph $\Gamma_1$ \label{Gamma}}
\end{figure}

The above description is somewhat implicit. To make it more instructional, we introduce an intermediate auxiliary picture.

As a first step, substitute all crossing circles in $D_a$ by bow-ties. Then substitute every crossing that is left by a 4-sided face, whose vertices lie on the edges of $D_a$ coming from this crossing (we will call such faces ``pillowcases"). Whenever two vertices happen to be on one segment $s_i$ of $D_a$, merge them into one vertex. First we do this for the vertices of the pillowcase faces (we obtain a picture shown in Figure \ref{Gamma}, in the middle, where edges of the new bow-tie and pillowcase faces are depicted by dotted lines, and the vertices shown as black dots). Then we perform merging for the rest of the vertices.  We obtain the graph $\Gamma_1$ (Figure \ref{Gamma}, right). 

The balls $B_1$ and $B_2$  together with $\Gamma_i$ form polyhedra decomposing $S^3 - L$. We will adopt the notation $P_1$ and $P_2$ for them. The link complement is obtained by folding the bow-ties in $P_i$ along each vertex to glue the pairs of triangles together; then folding each pillowcase so that two of its vertices that were on the same overpass are glued together; then, for each folded pillowcase located at a crossing (one pillowcase from $\Gamma_1$, and one from $\Gamma_2$), identifying the four edges (two from each pillowcase) as in \cite{Menasco:PolyhedraDecomposition}; then doubling along the rest of the faces. Figure \ref{pillowcase1} illustrates what happens with a pillowcase in the process: its vertices are labeled by $a, b, c, d$. In particular, for the link diagram in Figure \ref{Gamma}, in order to fold two pillowcases with vertices 3, 6, 7, 8 in $P_1$ and $P_2$, we first identify together 3 and 8 for $P_1$, and 6 and 7 for $P_2$. We then identify all four remaining pillowcase edges (two in $P_1$ and two in $P_2$) as in Menasco.
\begin{figure}[ht]
\centering
\begin{subfigure}[b]{0.6 \textwidth}
\centering
\includegraphics[scale=0.7]{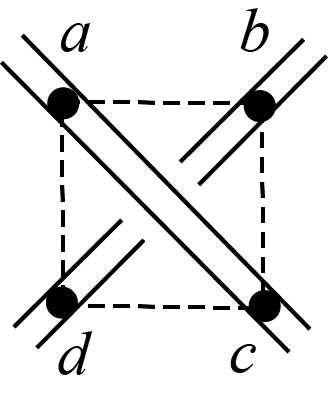} \qquad
\includegraphics[scale=0.7]{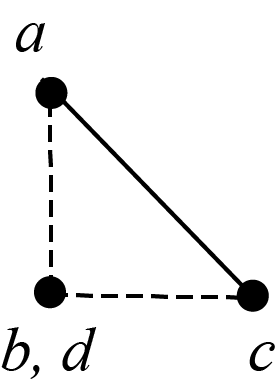} \qquad
\includegraphics[scale=0.7]{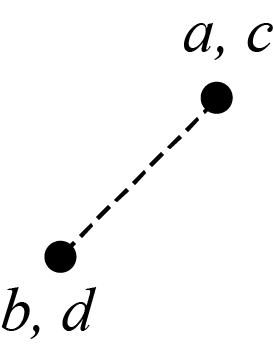}
\caption{Folding a\\ pillowcase}
\label{pillowcase1}
\end{subfigure}
\begin{subfigure}[b]{0.3 \textwidth}
\centering
\includegraphics[scale=0.7]{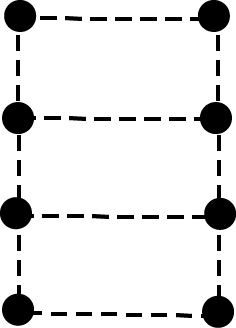}
\caption{Faces resulting from a 3-crossing twist}
\label{pillowcase2}
\end{subfigure}
 
\caption{Pillowcase}
\end{figure}

In the proof below, we will perform a triangulation of  $S^3 - L$, inspired by the one suggested by Ian Agol and Dylan Thurston in the appendix to \cite{Lackenby:Volume}. If every twist of $D$ has only one crossing, then the following theorem essentially recovers Adams' Theorem 2.1 from \cite{Adams:Thesis} (there is only a difference in the additive constant).

\begin{theorem} \label{Thm Upper Bound}
Given a diagram $D$ of a hyperbolic alternating link $K$, 
$$ Vol(S^3- K) \leq (10 g_4(D)+ 8t_3(D)+6t_2(D)+4 t_1(D) - a ) v_3, $$
where $a =10$ if $g_4$ is non-zero, $a=7$ if $t_3$ is non-zero, and $a=6$ otherwise. 
\end{theorem}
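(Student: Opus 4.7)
The plan is to bound $\Vol(S^3-L)$, since the augmentation step already gives $\Vol(S^3-K)\leq \Vol(S^3-J)=\Vol(S^3-L)$, and then to produce a triangulation of the two polyhedra $P_1,P_2$ whose total tetrahedron count, multiplied by $v_3$, dominates the volume; here I use the Milnor inequality that an ideal hyperbolic tetrahedron has volume at most $v_3$.

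Following the Agol--Thurston approach in the appendix to \cite{Lackenby:Volume}, I would triangulate each $P_i$ by choosing a distinguished ideal vertex $v_i\in P_i$ and coning from $v_i$ to every face of $P_i$ not incident to $v_i$; an $n$-sided face then supplies $n-2$ ideal tetrahedra, while faces incident to $v_i$ contribute nothing. The total tetrahedron count is therefore the sum $\sum_f (|f|-2)$ over non-incident faces in $P_1\cup P_2$.

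Next I would organize this count twist by twist. An augmented twist (one of the $g_4(D)$ twists with at least $4$ crossings) contributes the bow-tie/twisting-disk configuration of the appendix, which places exactly $10$ tetrahedra into $P_1\cup P_2$. A short twist with exactly $i\in\{1,2,3\}$ crossings survives in $D_a$ as $i$ pillowcase faces stacked along $i-1$ bigon faces in each $\Gamma_k$; after the pillowcase folding and the edge identifications across $P_1$ and $P_2$ described in \cite{Menasco:PolyhedraDecomposition}, these contribute $2(i+1)$ tetrahedra, accounting for the terms $4t_1$, $6t_2$, and $8t_3$. The $-a$ correction is realized by choosing the cone vertices $v_1,v_2$ inside the richest twist type available: a bow-tie vertex if $g_4\neq 0$ (which absorbs all $10$ tetrahedra of that augmented twist), an interior vertex of a $3$-crossing twist if only $t_3\neq 0$ (absorbing $7$ of the $8$ since one of the pillowcase/region faces remains non-incident to the cone vertex), and a vertex inside a $2$- or $1$-crossing twist otherwise (absorbing $6$).

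The principal obstacle is the combinatorial bookkeeping of the preceding paragraph: one has to verify that the pillowcase foldings and bigon identifications really produce exactly $2(i+1)$ tetrahedra per short twist after accounting for the shared planar regions $R_j$ between neighboring tangles, and that the savings realized by the chosen cone vertex are exactly $10$, $7$, or $6$ rather than some value obtained from the naive $|f|-2$ accounting on each face adjacent to the cone vertex. Once these local counts are checked, summing over all twists, comparing with the cone vertex savings, and multiplying by $v_3$ yields the stated bound.
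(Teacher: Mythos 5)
Your high-level strategy is the paper's: augment the long twists, pass to $L$ with $\Vol(S^3-K)\le\Vol(S^3-J)=\Vol(S^3-L)$, triangulate the two polyhedra $P_1,P_2$, count tetrahedra twist by twist, recover $-a$ by a good choice of vertex, and multiply by $v_3$. But the content of the theorem is precisely the per-twist counts $10,8,6,4$ and the savings $10,7,6$, and you assert these rather than derive them --- your closing paragraph lists exactly this as the ``principal obstacle,'' i.e.\ the bulk of the proof is deferred. Worse, the coning scheme you chose does not support the twist-by-twist attribution. The paper cones from a \emph{new finite vertex} $v_i$ in the interior of each $P_i$, so each pair of identified $n$-gon faces yields a bipyramid cut into $n$ tetrahedra around the axis $v_1v_2$: exactly one tetrahedron per side of each edge of $\Gamma_1$. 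That is what localizes the count: each crossing circle is charged the $4$ cone tetrahedra over its two bow-tie faces plus $6$ for the outer bow-tie edges; an $i$-crossing twist gives a chain of $i$ pillowcases whose own cone tetrahedra degenerate under the folding, leaving one tetrahedron for each of the $2(i+1)$ boundary edges of the chain. With your scheme (cone from an ideal vertex, $n-2$ tetrahedra per non-incident face) the total is $\sum_F(|F|-2)$, which depends on the global face structure and does not break up into $10g_4+8t_3+6t_2+4t_1$ without a separate Euler-characteristic argument; the numbers you quote would not simply ``drop out.'' (Also, the bigons of a short twist become shared \emph{edges} between consecutive pillowcases, not bigon faces.)

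The $-a$ step is also misdescribed. In the paper the saving comes from collapsing the two finite vertices onto a single ideal vertex $v$ at the very end: what dies are \emph{all} tetrahedra in the bipyramids over the two non-pillowcase faces $F_1,F_2$ incident to $v$, which have at least $3+3$ sides in general and at least $3+4$ sides when $v$ is chosen next to a $3$-crossing twist; these are generally \emph{not} ``$7$ of the $8$'' tetrahedra charged to that twist --- many of the eliminated tetrahedra are charged to neighboring crossings, and the argument only needs that at least $7$ (resp.\ $6$, resp.\ $10$ at a bow-tie center) distinct counted tetrahedra disappear. Moreover the collapse vertex must be chosen so that no incident edge lies on two pillowcases of the same twist; this excludes exactly the ``interior'' vertices of a $3$-crossing twist that you propose to use --- the admissible choices are the corner vertices of the pillowcase chain. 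So while the skeleton matches the paper, the proposal as written has a genuine gap at the two decisive points: the local tetrahedron count per twist and the justification (and legality) of the vertex collapse realizing $a$.
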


\begin{proof}
Put
two new vertices $v_i, \, i=1, 2$, in the interior of $P_i$. Cone the vertices to the faces
of the polyhedra. Every bow-tie face of $P_i$ becomes a base for two tetrahedra. Every pillowcase face of $P_i$ becomes a base of a pyramid that can be subdivided into two tetrahedra. For all the faces of $P_i$ that are not bow-ties or pillowcases, take a pyramid with the base at the face and the vertex $v_i$. For every two faces of $P_1, P_2$ identified together, from two such (similar) pyramids we obtain a bipyramid with the vertices $v_1, v_2$. This bipyramid can be subdivided into $n$ tetrahedra (where the base faces for two pyramids had $n$ sides each), all of which share an edge from $v_1$ to $v_2$. Substitute the polyhedra by the resulting triangulation. 

 We need to count the maximal possible number of tetrahedra. Every crossing circle yields four tetrahedra based on two bow-tie triangles (two tetrahedra above $P$, and two below), and six more tetrahedra adjacent to the edges of the bow-tie from the outside. Every twist with one crossing yields four new edges of $P_1$ (glued to four similar edges of $P_2$), which gives at most eight tetrahedra, adjacent to two different sides of every edge. However, the four tetrahedra based on the pillowcase faces (two above $P$, and two below) degenerate when we perform folding of pillowcases and identifying their edges as described above. Every twist with two crossings yields two pillowcase faces sharing one common edge, and every crossing with three crossings yields three pillowcase faces sharing edges as in Figure \ref{pillowcase2}. Therefore, the number of tetrahedra that will not degenerate, is 6 for a twist with two crossings, and 8 for a twist with three crossings. We obtain $10 g_4(D)+ 8t_3(D)+6t_2(D)+ 4t_1(D)$ tetrahedra total.  
 
Lastly, collapse the non-ideal vertices $v_1, v_2$ to one chosen vertex that $P_1$ shares with $P_2$ under the gluing. In particular, if $g_4(D)$ is non-zero, collapse them to the center of one of the bow-ties, eliminating ten tetrahedra, as described in the appendix to \cite{Lackenby:Volume}. Otherwise collapse them to any other vertex $v$ such that every edge, incident to it, does not belong to two distinct pillowcases originating from the same twist of $D_a$ (in Figure \ref{pillowcase2}, the suitable vertices would be two on the top, and two in the bottom, but not the rest). 
The vertex $v$ is then incident to two four faces: two pillowcases, and two distinct non-pillowcase faces,  $F_1$ and $F_2$. The pillowcase faces did not contribute to the total number of tetrahedra, since they will be folded later. Both $F_1$ and $F_2$ have least three sides. Therefore, we collapse at least 6 tetrahedra. If $t_3$ is non-zero, then at least one of $F_1, F_2$ has four sides, and we collapse at least 7 tetrahedra.

Therefore, we obtain the triangulation of $S^3-L$ with $10g_4(D)+ 8t_3(D)+6t_2(D)+4 t_1(D)-a$ tetrahedra in it, with $a$ as stated in the theorem. Since the greatest possible hyperbolic volume of an ideal tetrahedron is $v_3$, the hyperbolic volume of $S^3-K$ is at most $(10g_4(D)+ 8t_3(D)+6t_2(D)+4 t_1(D)-a)v_3$. From Thurston's Theorem 6.5.6 \cite{Thurston:Book}, the hyperbolic volume of $S^3-K$ is bounded from above by the hyperbolic volume of $S^3-J$.
\end{proof}

 Note that our polyhedral decomposition is not the decomposition described by Menasco \cite{Menasco:PolyhedraDecomposition}, since we introduced more vertices.

 \begin{remark} \label{AdditionalRemark}
 
 Theorem \ref{Thm Upper Bound} is stated for alternating links. In the proof, we pass from a link $K$ to the (partially) augmented link $J$. Theorem 6.5.6 from \cite{Thurston:Book} and Corollary 5 from \cite{Adams:3punctured} are then used. They both require $J$ and $K$ to be hyperbolic, which is always the case if $K$ is a hyperbolic alternating link (due to \cite{Adams:Augmented}). If one chooses to work with a non-alternating hyperbolic link $K$, the link $J$ might be not hyperbolic anymore. 

 \end{remark}

\begin{remark} \label{remark}
An example in the appendix to \cite{Lackenby:Volume} demonstrates that the $10$ in the bound in Theorem \ref{Thm Upper Bound} is an optimal constant, and cannot be improved within this setting. However, for some classes of links one may further improve the bound by subtracting additional terms. The next subsection gives an example.

\subsection{Triangular regions}  \label{Section Triangular Regions}

Let us introduce another parameter $\Delta$, that arises from the link diagram. In the described triangulation, for each triangular region substitute three tetrahedra by two: one above and one below the projection plane.  Under this modification, $\Delta$ is simply the number of triangular faces of $P_1$, and we may subtract it from our previous count:
\begin{equation} \label{Eq with Delta}
Vol(S^3- K) \leq \left (10 g_4(D)+ 8t_3(D)+6t_2(D)+4 t_1(D) - a - \Delta \right ) v_3. 
\end{equation}

This however changes the number of tetrahedra that we can collapse in the end, and $a$ becomes 8 when $g_4$ is non-zero, $6$ if $t_3$ is non-zero, and $4$ otherwise.

In the diagram $D_a$ of the augmented alternating link, $\Delta$ corresponds to the number of regions of $D_a$ that yield triangular regions of $\Gamma_1$. There are two types of such regions in $D_a$. Firstly, the ones bounded by exactly three segments, say, $s_1, s_2, s_3$, lying in $P$ and, in addition, possibly meeting up to three crossing circles. In Figure \ref{Gamma}, the examples of such regions are bounded by segments numbered $3, 4, 7$ or $7, 5, 8$. Secondly, the regions that are bounded by exactly two segments lying in $P$ and, possibly, by one crossing circle, and have exactly one crossing circle piercing the region. The examples are the regions bounded by strands $2, 4$ or by strands $6, 8$ in Figure \ref{Gamma}. Note the difference between crossing circles bounding the regions and piercing the regions: by ``bound" we mean the situation seen in Figure \ref{planarregions1}, and by ``pierce" the situation in Figure \ref{augmentation1}, with respect to the region $R$.

In the diagram $D$, the parameter $\Delta$ corresponds to the number of regions that either are adjacent to exactly three crossings from three different twists, or are adjacent to exactly one twist and only one more crossing that does not belong to this twist, as region $R$ in Figure \ref{augmentation1}, right. E.g., all the triangles for the standard diagram of the figure-eight knot, as in Figure \ref{augmentation2}, are of the latter type. Since this number is uniquely determined by the diagram, we may adopt the notation $\Delta(D)$ instead of just $\Delta$ for further use.

\begin{figure}[ht]
\centering
\begin{subfigure}[b]{0.6 \textwidth}
\centering
\includegraphics[scale=0.5]{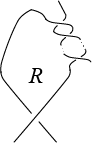} \hspace{0.1in}
\includegraphics[scale=0.5]{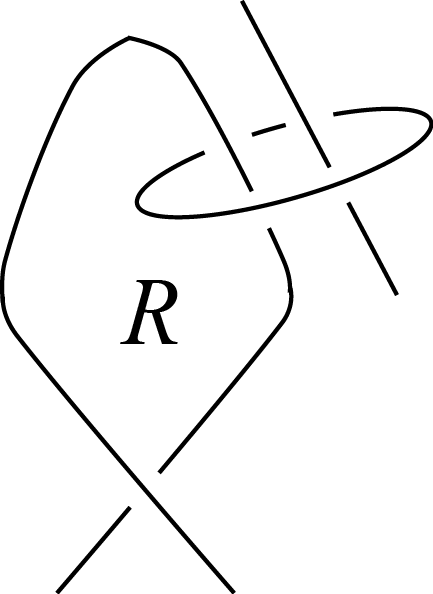}
 \qquad
 \caption{$R$ before and after augmentation}
\label{augmentation1}
\end{subfigure}
\begin{subfigure}[b]{0.3 \textwidth}
\centering
\includegraphics[scale=0.3]{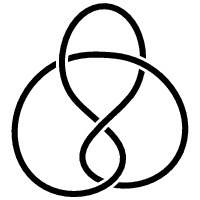}
\caption{The figure-eight knot} 
\label{augmentation2}
\end{subfigure}
 
\caption{Augmentation}
\end{figure}

\end{remark}

\begin{example}
Consider the reduced alternating diagram of the figure-8 knot as in Figure \ref{augmentation2}. Recall that the actual hyperbolic volume of the complement is $2v_3$. Note that $t_1=0, t_2=2, \Delta=4$. Then Theorem \ref{Thm Upper Bound} together with Equation (\ref{Eq with Delta}) in Section \ref{Section Triangular Regions} give $4 v_3$ in the bound. This can be compared, for example, with the bound from Theorem \ref{Lackenby:Volume} in terms of the number of twists, that gives $10v_3$. 
\end{example}

\section{Relation with coefficients of the colored Jones polynomial}
 
Let 
\begin{equation} \label{ColoredJones}
J_L(n)=\pm (a_n q^{k_n} - b_{n} q^{k_n-1} + c_{n} q^{k_n-2})+\ldots \pm (\gamma_n q^{k_n-r_n+2} - \beta_n q^{k_n-r_n+1} + \alpha_n q^{k_n-r_n})
\end{equation}
 be the colored Jones polynomial of an alternating link $K$, where $a_n$ and $\alpha_n$ are positive. 
 The absolute values of the first three and last three coefficients of the colored Jones polynomial are independent of the color $n$ when $n \geq 3$ (see \cite{DasbachLin:HeadAndTail}), and $b_n$ and $\beta_n$ are independent of $n$ for $n\geq 2$.
Moreover, the leading and trailing coefficients are known to satisfy $a_n=\alpha_n=1$ for all $n$.

We will use the values $b_2,c_2, c_3 , \beta_2, \gamma_2$ and $\gamma_3$ to give a refinement of the Volumish Theorem \cite{DL:VolumeIsh, DasbachLin:HeadAndTail} which states that for a prime, alternating, non-torus link $L$
the hyperbolic volume is bounded by:
$$\Vol(S^3-L) \leq 10 (b_2+\beta_2-1) v_3,$$
where, as before, $v_3$ is the volume of a regular ideal hyperbolic tetrahedron. 

\begin{figure}
\begin{tikzpicture}[scale=1]
\draw (-1,-1) -- (0,0) node[right=0.5cm]{B} node[left=0.5cm]{B} node[below=0.5cm]{W} node[above=0.5cm]{W} -- (1,1);
\draw (-1,1) -- (-0.08,0.08);
\draw (0.08,-0.08) -- (1,-1);

\end{tikzpicture}
\caption{Black (B) and white (W) shading of the alternating link diagram \label{BWcoloring}}
\end{figure}

The values $b_2,c_2, c_3 , \beta_2, \gamma_2$ and $\gamma_3$ can be easily read off a reduced alternating diagram $D$ of an alternating link $L$ as follows.
Suppose $D$ is twist reduced in the sense of \cite{Lackenby:Volume}, i.e. the twist number $t(D)$ is minimized among all alternating diagrams of $L$.
Color the diagram $D$ in black and white as a checkerboard in such a way that black regions are clockwise from an over-strand, while white regions are counter-clockwise from an over-strand; see Figure \ref{BWcoloring}. One can associate a plane graph to the black regions by placing a vertex inside each black region, and by connecting two vertices if they are adjacent to the same crossing. We will call it the black checkerboard graph and denote it by $B(D)$. By using white regions instead, one obtains the planar white checkerboard graph $W(D)$. Note, that $B(D)$ and $W(D)$ are dual to each other. 
The vertices of $B(D)$ correspond to the faces of $W(D)$ and vice versa. The graphs $B(D)$ and $W(D)$ might have multiple edges between the same two vertices (such objects are often referred as multigraphs). If there are $m$ edges between two vertices $v_1$ and $v_2$ in a graph $G$, we leave just one of them (denote it by $e$) and delete the rest. The resulting graph is called the reduced graph, and we denote it by $G'$. Using the initial graph $G$, we can assign multiplicity to every edge $e$ of $G'$. Figure \ref{CheckerboardGraphs} shows a reduced alternating diagram of the knot $9_{20}$ together with its two reduced checkerboard graphs $B'(D)$ and $W'(D)$.

By $\tau_B$ or $\tau_W$ let us denote the number of triangles in  $B'(D)$ or $W'(D)$. 
Finally, denote by $n_B(i)$ or $n_W(i)$ the number of edges of multiplicity at least $i$ in $B'(D)$ or $W'(D)$. Thus, by definition, $n_B(i)+n_W(i)=g_i$.

\begin{figure}
\begin{subfigure}[b]{\textwidth}
\centering
\includegraphics[width=5cm]{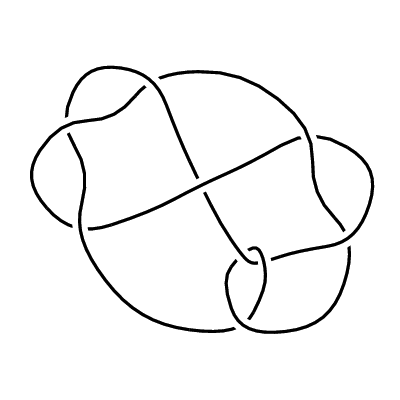}
\vspace*{-20pt}
\subcaption{A reduced alternating diagram $D$ of $9_{20}$}
\end{subfigure}

\begin{subfigure}[b]{0.34 \textwidth}
\vspace{20pt}
\centering
\begin{tikzpicture}[scale=1.7] 
\path (0,1) coordinate (X1); 
\path (-1,1) coordinate (X2);
\path (-1,0) coordinate (X3);
\path (0,0) coordinate (X4);
\path (1,0) coordinate (X5);
\path (1,1) coordinate (X6);
\path (0.5,0.5) coordinate (X7);
\foreach \j in {1, ..., 7} \fill (X\j) circle (1pt); 
\draw (X1)--(X2)--(X3)--(X4)--(X5)--(X6)--(X1)--(X4)--(X7)--(X5);
\end{tikzpicture}
\centering
\subcaption{Checkerboard graph without multiple edges $W(D)=W'(D)$}
\end{subfigure}
\begin{subfigure}[b]{0.34 \textwidth}
\centering
\begin{tikzpicture}[scale=1.7]
\path (0,0) coordinate (X1);
\path (1,0) coordinate (X2);
\path (0,1) coordinate (X3);
\path (-1,0) coordinate (X4);
\foreach \j in {1, ..., 4} \fill (X\j) circle (1pt); 
\draw (X1)--(X2)--(X3)--(X4)--(X1)--(X3);
\end{tikzpicture}
\subcaption{Reduced checkerboard graph $B'(D)$}
\end{subfigure}
 
\caption{The knot $9_{20}$ and its two reduced checkerboard graphs. \label{CheckerboardGraphs}}
\end{figure}

The following proposition follows immediately from results in \cite{DasbachLin:HeadAndTail}:

\begin{proposition} \label{lem:threecoeff}
Let the colored Jones polynomial be given in the form as in Equation (\ref{ColoredJones}).
Then
\begin{eqnarray*}
b_2+\beta_2 &=& t(D) \\
&=& t_1(D)+g_2(D)\\
(c_2+\gamma_2)-(c_3+\gamma_3)&=& t(D)+g_2(D)\\
&=&t_1(D)+2 g_2(D)\\
(c_2+c_3)+(\gamma_2+\gamma_3)-(b_2^2+\beta_2^2)&=&g_2 -2 (\tau_A(D)+\tau_B(D))
\end{eqnarray*}
\end{proposition}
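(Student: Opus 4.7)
The plan is to combine several formulas from \cite{DasbachLin:HeadAndTail} that express the first three (and, by symmetry, last three) coefficients of the colored Jones polynomial of an alternating link in terms of graph invariants of the reduced checkerboard graphs $B'(D)$ and $W'(D)$, and then translate those graph invariants into the twist-region data $t_i(D)$, $g_i(D)$ defined in Section 2. Throughout, the key diagrammatic observation is that the edges of $B'(D) \sqcup W'(D)$ are in bijection with the twist regions of the twist-reduced alternating diagram $D$: each twist region collapses to a single edge in exactly one of the two reduced checkerboard graphs, with multiplicity equal to the number of crossings in that twist.

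For the first identity, I would recall the Dasbach--Lin identification of $b_2$ (respectively $\beta_2$) with the edge count of $B'(D)$ (respectively $W'(D)$), up to the appropriate additive normalization. The bijection above then immediately gives $b_2+\beta_2 = t(D)$, and the splitting $t(D)=t_1(D)+g_2(D)$ is just the definition of $g_2(D)$.

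For the second identity, I would invoke the Dasbach--Lin formula that computes $(c_2+\gamma_2)-(c_3+\gamma_3)$ as the total edge count of $B'(D)\sqcup W'(D)$ plus the total multiplicity excess $\sum_e (\mu(e)-1)$ of its multi-edges. Multi-edges correspond precisely to twist regions with at least two crossings, and the sum of their multiplicity excesses is $t(D)-t_1(D)=g_2(D)$; combining the two contributions yields $t(D)+g_2(D)=t_1(D)+2g_2(D)$.

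For the third identity, I would use the Dasbach--Lin expression for $(c_2+c_3)+(\gamma_2+\gamma_3)-(b_2^2+\beta_2^2)$, which decomposes into a contribution of $g_2(D)$ coming from edge multiplicities in $B'(D)\sqcup W'(D)$ and a contribution of $-2(\tau_A(D)+\tau_B(D))$ coming from the triangles in the reduced checkerboard graphs. The main (really the only) obstacle is tracking the sign and normalization conventions of \cite{DasbachLin:HeadAndTail} precisely enough that the additive constants and signs line up in each line; beyond this bookkeeping, the proposition is an immediate combinatorial rewriting of their formulas in terms of $t_i(D)$ and $g_i(D)$.
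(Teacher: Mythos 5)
Your overall route is the one the paper intends (the paper offers no argument beyond ``follows immediately from results in \cite{DasbachLin:HeadAndTail}''), but the combinatorial translation you build it on contains a genuine error. Your ``key diagrammatic observation'' --- that the edges of $B'(D)\sqcup W'(D)$ are in bijection with the twist regions --- is false. A twist region with $m$ crossings contributes a single edge (of multiplicity $m$) to \emph{one} reduced checkerboard graph, but it also contributes a path of $m$ simple edges, through the $m-1$ bigon vertices, to the \emph{other} graph, and those path edges survive reduction. Hence $E(B')+E(W')=c(D)+t(D)$, not $t(D)$. What is true (for a twist-reduced diagram) is only that edges of multiplicity at least $i\geq 2$ correspond to twist regions with at least $i$ crossings, i.e.\ $n_B(i)+n_W(i)=g_i(D)$ for $i\geq 2$. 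Moreover $b_2$ is not ``the edge count up to an additive normalization'': it is the cycle rank $E(G')-V(G')+1$ of the reduced graph, and the identity $b_2+\beta_2=t(D)$ comes from combining $E(B')+E(W')=c(D)+t(D)$ with Euler's formula $V(B)+V(W)=c(D)+2$ for the dual checkerboard graphs --- it does not follow from your bijection, which would give the wrong count.

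The same slip propagates into your second identity. The ``total multiplicity excess'' $\sum_e(\mu(e)-1)$ equals $c(D)-t(D)$, not $g_2(D)$; the quantity equal to $g_2(D)$ is the \emph{number} of multi-edges, and with your stated ingredients the count would come out to $2c(D)$ rather than $t(D)+g_2(D)$. The formulas from \cite{DasbachLin:HeadAndTail} that actually do the work are, per reduced checkerboard graph $G'$ with cycle rank $b$, triangle count $\tau$, and $\mu$ edges of multiplicity at least $2$: the stable ($n\geq 3$) third coefficient equals $\binom{b}{2}-\tau$, while the $n=2$ third coefficient equals $\binom{b+1}{2}-\tau+\mu$. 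These give $c_2-c_3=b_2+n_B(2)$ and $\gamma_2-\gamma_3=\beta_2+n_W(2)$, whence the second identity $t(D)+g_2(D)$, and $c_2+c_3-b_2^2=n_B(2)-2\tau_B$ (similarly for the other side), whence the third identity $g_2-2(\tau_A+\tau_B)$. Your sketch of the third identity is right in spirit, but without the corrected edge/multiplicity bookkeeping the first two identities do not follow as you have written them.
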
 

\begin{example}
Let $K$ be the knot $9_{20}$ with a diagram $D$ as in Figure \ref{CheckerboardGraphs}.
There are $5$ twists, i.e. $t(D)=5$, and $t_1(D)=2, t_2(D)=2, t_3(D)=1$ and $\tau_A(D)=1, \tau_B(D)=2$.
The colored Jones polynomial of $K$ is given by
\begin{eqnarray*}
J_K(2)&=&-\frac{1}{q^9}+\frac{3}{q^8}-\frac{5}{q^7}+\frac{6}{q^6}-\frac{7}{q^5}+\frac{7}{q^4}-\frac{5}{q^3}+\frac{4}{q^2}-\frac{2}{q}+1\\
J_K(3)&=& \frac{1}{q^{25}}-\frac{3}{q^{24}}+\frac{2}{q^{23}}+ \ldots -1-2 q + q^2
\end{eqnarray*} 
Hence, $b_2=2, c_2=4, c_3=-1, \beta_2=3, \gamma_2=5,$ and $\gamma_3=2.$
\end{example}

Combining Proposition \ref{lem:threecoeff} with Theorem \ref{Thm Upper Bound} we obtain

\begin{theorem} 
\label{refinedVolumeIsh}
Let $K$ be an alternating, prime, non-torus link. Then
$$\Vol(S^3-K) \leq \left ( 6 ((c_2+\gamma_2) - (c_3 + \gamma_3)) - 2 (b_2+ \beta_2) -a \right ) v_3 \leq 10 (b_2+\beta_2-1) v_3,$$
where $a=10$ if $b_2+\beta_2 \neq (c_2-c_3)+(\gamma_2-\gamma_3)$ and $a=4$ otherwise.
\end{theorem}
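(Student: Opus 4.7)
The plan is to substitute the combinatorial dictionary of Proposition~\ref{lem:threecoeff} into the geometric upper bound of Theorem~\ref{Thm Upper Bound} and then collect the additive constants. First I would express the claimed combination of coefficients in terms of twist data. From Proposition~\ref{lem:threecoeff},
\[
6\bigl((c_2+\gamma_2)-(c_3+\gamma_3)\bigr) - 2(b_2+\beta_2) = 6(t_1 + 2 g_2) - 2(t_1 + g_2) = 4\, t_1(D) + 10\, g_2(D).
\]
The same proposition shows that the case-split condition for $a$ is simply whether $g_2(D)$ vanishes: the condition $b_2+\beta_2 \ne (c_2-c_3)+(\gamma_2-\gamma_3)$ is equivalent to $g_2(D) > 0$, while its negation forces $g_2(D) = 0$, i.e.\ every twist is a single crossing.

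Second, since $g_2(D) = g_4(D) + t_3(D) + t_2(D)$, the elementary inequality $10\, g_4 + 8\, t_3 + 6\, t_2 \le 10\, g_2$, with deficit $2\, t_3 + 4\, t_2$, feeds Theorem~\ref{Thm Upper Bound} into the target estimate. Writing $a_0 \in \{10, 7, 6\}$ for the constant of Theorem~\ref{Thm Upper Bound}, I obtain
\[
\Vol(S^3 - K) \le \bigl(10\, g_2 + 4\, t_1 - (2\, t_3 + 4\, t_2 + a_0)\bigr) v_3 = \bigl(6((c_2+\gamma_2)-(c_3+\gamma_3)) - 2(b_2+\beta_2) - a\bigr) v_3
\]
with $a := 2\, t_3 + 4\, t_2 + a_0$.

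Third, I would confirm the claimed value of $a$ by a short case analysis on $(g_4, t_3, t_2)$. When $g_2(D) = 0$, all twists are single-crossing and $a_0 = 6$, so the bound is $(4\, t_1 - 6) v_3 \le (4\, t_1 - 4) v_3$, which matches $a = 4$. When $g_2(D) > 0$ I would split into the three subcases $g_4 > 0$ (where $a_0 = 10$ already), $g_4 = 0$ with $t_2 > 0$ (where $4\, t_2 \ge 4$ combines with $a_0 = 6$ to yield $a \ge 10$), and $g_4 = 0$ with $t_3 > 0$ and $t_2 = 0$ (where the slack $2\, t_3$ combines with $a_0 = 7$). The right-hand inequality then reduces to $10\, g_2 + 4\, t_1 - 10 \le 10(t_1 + g_2 - 1) = 10(b_2+\beta_2-1)$, i.e.\ to $4\, t_1 \le 10\, t_1$, which is immediate.

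The principal obstacle is the last subcase above, where $a_0 = 7$ and $t_3$ could be as small as one; confirming the full $a = 10$ in that regime is the only step that is not pure substitution, and would require either appealing to the triangular region refinement of Remark~\ref{remark} to recover the missing unit or using the primality and non-torus hypotheses to rule out the borderline configuration. All the other steps are routine algebra once Proposition~\ref{lem:threecoeff} and Theorem~\ref{Thm Upper Bound} are in hand.
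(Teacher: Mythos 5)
Your route is exactly the paper's: the paper proves Theorem \ref{refinedVolumeIsh} by nothing more than substituting Proposition \ref{lem:threecoeff} into Theorem \ref{Thm Upper Bound}, and your algebra carrying this out is correct ($6((c_2+\gamma_2)-(c_3+\gamma_3))-2(b_2+\beta_2)=4t_1+10g_2$, the case condition for $a$ is $g_2=0$ versus $g_2>0$, and $10g_2-(10g_4+8t_3+6t_2)=2t_3+4t_2$). Moreover, the obstacle you flag is genuine and not an artifact of your write-up: in the subcase $g_4=0$, $t_3=1$, $t_2=0$ the constant from Theorem \ref{Thm Upper Bound} is $a_0=7$, so the combination yields only $2t_3+4t_2+a_0=9$, i.e.\ the bound $(4t_1+10g_2-9)v_3$, which is weaker by one tetrahedron than the claimed $a=10$. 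The paper offers no additional argument at this point, so within the stated framework the honest conclusion of the substitution is $a=9$ in that borderline configuration (or one must strengthen the collapsing step of Theorem \ref{Thm Upper Bound} to remove $8$ rather than $7$ tetrahedra when $t_3\neq 0$). Your proposed rescues are not automatic either: the triangular-region refinement of Section \ref{Section Triangular Regions} replaces $a_0=7$ by $6$ but subtracts $\Delta$, so it closes the gap only when $\Delta\geq 2$, and $\Delta$ can vanish; and primality/non-torus-ness is used only to guarantee hyperbolicity and a twist-reduced alternating diagram, not to exclude this configuration.

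Two small additional points. For the right-hand inequality you only check the $a=10$ case; when $a=4$ (so $g_2=0$) it reduces to $4t_1-4\leq 10t_1-10$, i.e.\ $t_1\geq 1$, which holds because a hyperbolic alternating link has a nonempty twist-reduced diagram, but this should be said. Apart from the off-by-one case above (which is a defect of the theorem's stated constant rather than of your argument) and this omitted trivial case, the proposal is the same proof the paper intends.
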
 

\begin{remark}
It is tempting to try to bound the $\Delta$ in Section \ref{Section Triangular Regions} from below by $\tau_A$ and $\tau_B$. However, $\tau_A$ and $\tau_B$ count triangles in the checkerboard graphs, and not triangular regions in the planar embeddings of the checkerboard graphs.
\end{remark}

\section{Acknowledgments }
 
The authors are grateful to Jessica Purcell for many enlightening conversations and interest in this work. We would also like to thank Colin Adams, Ian Agol, Dave Futer, Effie Kalfagianni, Marc Lackenby and Peter Shalen for helpful discussions on various occasions, and to the anonymous reviewer for aid in improving the paper. The authors acknowledge support from U.S. National Science Foundation grants DMS-1317942, DMS-0739382 (VIGRE), and DMS-1406588.
  
\bibliography{WithAnastasiia}

\providecommand{\bysame}{\leavevmode\hbox to3em{\hrulefill}\thinspace}
\providecommand{\MR}{\relax\ifhmode\unskip\space\fi MR }
\providecommand{\MRhref}[2]{%
  \href{http://www.ams.org/mathscinet-getitem?mr=#1}{#2}
}
\providecommand{\href}[2]{#2}
\begin{thebibliography}{FKP13}

\bibitem[Ada83]{Adams:Thesis}
Colin~C. Adams, \emph{{Hyperbolic Structures on Link Complements}}, Ph.D.
  thesis, University of Wisconsin, 1983.

\bibitem[Ada85]{Adams:3punctured}
\bysame, \emph{{Thrice-Punctured Spheres in Hyperbolic 3-Manifolds}}, Trans.
  Amer. Math. Soc. \textbf{287} (1985), no.~2, 645--656.

\bibitem[Ada86]{Adams:Augmented}
\bysame, \emph{{Augmented alternating link complements are hyperbolic}},
  Low-dimensional topology and Kleinian groups (Coventry/Durham, 1984), London
  Math. Soc. Lecture Note Ser., vol. 112, Cambridge Univ. Press, Cambridge,
  1986, pp.~115--130.

\bibitem[CR98]{CallahanReid:Hyperbolic}
Patrick~J. Callahan and Alan~W. Reid, \emph{{Hyperbolic Structures on Knot
  Complements}}, Chaos, Solitons and Fractals \textbf{9} (1998), no.~4/5,
  705--738.

\bibitem[DL06]{DasbachLin:HeadAndTail}
Oliver~T. Dasbach and Xiao-Song Lin, \emph{{On the head and the tail of the
  colored Jones polynomial}}, Compositio Mathematica \textbf{142} (2006),
  no.~05, 1332--1342.

\bibitem[DL07]{DL:VolumeIsh}
\bysame, \emph{{A volumish theorem for the Jones polynomial of alternating
  knots}}, Pacific J. Math. \textbf{231} (2007), no.~2, 279--291.

\bibitem[FKP13]{FKP:Guts}
David Futer, Efstratia Kalfagianni, and Jessica~S. Purcell, \emph{{Guts of
  surfaces and the colored Jones polynomial}}, Lecture Notes in Mathematics,
  vol. 2069, Springer, Heidelberg, 2013.

\bibitem[Kas97]{Kashaev:VolumeConjecture}
Rinat~M. Kashaev, \emph{{The hyperbolic volume of knots from the quantum
  dilogarithm}}, Letters in Mathematical Physics \textbf{39} (1997), no.~3,
  269--275.

\bibitem[Lac04]{Lackenby:Volume}
Marc Lackenby, \emph{{The volume of hyperbolic alternating link complements}},
  Proceedings of the London Mathematical Society \textbf{88} (2004), no.~1,
  204--224.

\bibitem[Lic81]{Lickorish:Tangles}
W.~B.~Raymond Lickorish, \emph{{Prime knots and tangles}}, Trans. Amer. Math.
  Soc. \textbf{267} (1981), no.~1, 321--332.

\bibitem[Men83]{Menasco:PolyhedraDecomposition}
William Menasco, \emph{{Polyhedra representation of link complements}},
  Contemp. Math \textbf{20} (1983), 305--325.

\bibitem[MM01]{Murakami:VolumeConjecture}
Hitoshi Murakami and Jun Murakami, \emph{{The colored Jones polynomials and the
  simplicial volume of a knot}}, Acta Mathematica \textbf{186} (2001), no.~1,
  85--104.

\bibitem[Thu02]{Thurston:Book}
William~P. Thurston, \emph{{The Geometry and Topology of Three-Manifolds}},
  electronic ed., 2002.

\bibitem[Tsv14]{Tsvietkova:2Bridge}
Anastasiia Tsvietkova, \emph{{Exact volume of hyperbolic 2-bridge links}},
  Comm. Anal. Geom. \textbf{22} (2014), no.~5, 881--896.

\bibitem[TT14]{TT:AlternativeApproachToVolume}
Morwen~B. Thistlethwaite and Anastasiia Tsvietkova, \emph{{An alternative
  approach to hyperbolic structures on link complements}}, Alg. Geom. Top.
  \textbf{3} (2014), 1307--1337.

\bibitem[Wee05]{Weeks:SnapPea}
Jeffrey~R. Weeks, \emph{{Computation of Hyperbolic Structures in Knot Theory}},
  Handbook of Knot Theory, Elsevier B. V., Amsterdam, 2005, pp.~461--480.

\end{thebibliography}
\bibliographystyle {amsalpha}

\parbox[t]{2.8in}{
Oliver Dasbach\\
Department of Mathematics\\
Louisiana State University\\
Baton Rouge, Louisiana 70803, USA.\\
kasten@math.lsu.edu}
\qquad
\parbox[t]{2.8in}{
Anastasiia Tsvietkova\\
Department of Mathematics\\
University of California, Davis \\
One Shields Ave, Davis, CA 95616\\
tsvietkova@math.ucdavis.edu
}

\end{document}